\newcommand{\code}{\mathcal{C}}
\title[Neural codes with three maximal codewords]{Neural codes with three maximal codewords: \\ convexity and minimal embedding dimension}
\author{Katherine Johnston$^1$}
\address{$^1$Lafayette College}
\author{Anne Shiu$^2$}
\address{$^2$Texas A\&M University}
\author{Clare Spinner$^3$}
\address{$^3$University of Portland}
\date{August 30,  2020}
\newtheorem{theorem}{Theorem}[section]
\newtheorem{thm}[theorem]{Theorem}
\newtheorem{prop}[theorem]{Proposition}
\newtheorem{lem}[theorem]{Lemma}
\theoremstyle{definition}
\newtheorem{definition}[theorem]{Definition}
\newtheorem{exmp}[theorem]{Example}
\newtheorem{remark}[theorem]{Remark}
\begin{document}
	
\maketitle

\begin{abstract}
Neural codes, represented as collections of binary strings called codewords, are used to encode neural activity. A code is called convex if its codewords are represented as an arrangement of convex open sets in Euclidean space. Previous work has focused on addressing the question: how can we tell when a neural code is convex? 
Giusti and Itskov 
identified a local obstruction and proved that convex neural codes have no local obstructions. 
The converse is true for codes on up to four neurons, but false in general.
Nevertheless, we prove that this converse holds for codes with up to three maximal codewords, and moreover the minimal embedding dimension of such codes is at most two.
 \vskip 0.1cm
  \noindent \textbf{Keywords:} neural code, convex, simplicial complex, link, contractible
\end{abstract}


\section{Introduction}
The brain encodes spatial structure through neurons in the hippocampus known as \emph{place cells}, which are associated with regions of space called \emph{receptive fields}. Place cells fire at a high rate precisely when the animal is in that receptive field. The firing pattern of these neurons form what is called a {\em neural code}. A neural code is {\em convex} if it is generated by receptive fields that are convex. 
Such convex receptive fields are observed experimentally, 
so much work has focused on understanding which neural codes are convex~\cite{ cruz2019open, curto2017, curto2013neural,  goldrup2020classification, GOY, Jeffs2018convex, KLR}.


Giusti and Itskov identified a combinatorial criterion, called 
a local obstruction, and proved that if a neural code is convex, then it has no local obstructions~\cite{giusti_itskov2014}. 
The converse is false: Lienkaemper {\em et al.} found a counterexample code on five neurons with four maximal codewords~\cite{Lienkaemper_2017}.  
For codes on up to four neurons, however, the converse is true~\cite{curto2017}.  
Similarly, we prove that 
the converse holds 
for 
codes with up to three maximal codewords, as follows.

\begin{thm} \label{thm:main}
Let $\mathcal{C}$ be a neural code with up to three maximal codewords.  Then $\mathcal{C}$ is convex if and only if $\mathcal{C}$ has no local obstructions, and additionally the minimal embedding dimension of such a code is at most two (that is, the convex receptive fields can be drawn in $\mathbb{R}^2$).
\end{thm}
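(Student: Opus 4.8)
The plan is to take the ``only if'' direction for free, since it is exactly the Giusti--Itskov theorem~\cite{giusti_itskov2014} recalled above, and to concentrate on the converse together with the dimension bound: a code $\mathcal{C}$ with at most three maximal codewords and no local obstructions is realizable by an arrangement of convex open sets in $\mathbb{R}^2$. Write $\sigma_1,\dots,\sigma_m$ with $m\le 3$ for the maximal codewords; distinct maximal codewords are incomparable, so the cases $m=1$ and $m=2$ are degenerations of $m=3$ and can be disposed of by simpler instances of the construction below --- note for instance that when $m=1$ the complex $\Delta(\mathcal{C})$ is a simplex, every link is a simplex and hence contractible, and so a code with one maximal codeword never has a local obstruction. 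The substance is the case of exactly three maximal codewords.

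First, a combinatorial reduction. $\Delta(\mathcal{C})$ is the union of the (at most three) simplices on $\sigma_1,\dots,\sigma_m$; each such simplex, each pairwise intersection, and the triple intersection is again a simplex, hence contractible or empty, so this is a good cover and $\Delta(\mathcal{C})$ is homotopy equivalent to the nerve of these simplices --- a subcomplex of $\Delta^2$ completely determined by which of $\sigma_i\cap\sigma_j$ and $\sigma_1\cap\sigma_2\cap\sigma_3$ are empty. This places the ``Venn diagram'' of the three maximal codewords into one of a short list of shapes (up to relabeling), the only one with non-contractible nerve being $\partial\Delta^2\simeq S^1$, which occurs exactly when the three maximal codewords pairwise intersect but have empty common intersection. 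Applying the same good-cover observation to $\mathrm{lk}_\tau(\Delta(\mathcal{C}))$ for each $\tau\in\Delta(\mathcal{C})\setminus\mathcal{C}$ --- itself a union of at most three simplices --- decodes ``no local obstruction'' into an explicit finite list of membership conditions on $\mathcal{C}$: in particular $\emptyset\in\mathcal{C}$ unless $\Delta(\mathcal{C})$ is contractible, and every face whose link comes out disconnected must already be a codeword (this pins down precisely which intersection codewords and which partial intersections $\mathcal{C}$ must contain); faces lying inside a single maximal codeword have contractible simplex links and impose nothing, so the conditions come entirely from faces contained in two or more of the $\sigma_i$.

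Second, the planar construction, done case by case over the shapes from Step~1, building the arrangement in two layers. I first place three convex open ``base'' regions $V_1,V_2,V_3$ in $\mathbb{R}^2$ whose overlap pattern reproduces the nerve: for the $S^1$ shape, three long thin convex regions meeting pairwise with empty common intersection --- this is the one configuration that genuinely needs the plane rather than the line, by Helly's theorem --- and for the contractible shapes I am moreover free to take $V_1\cup V_2\cup V_3=\mathbb{R}^2$ using unbounded regions, which is what permits realizing the codes with $\emptyset\notin\mathcal{C}$. When several base regions must meet in a common set I use an ``ice-cream-cone'' gadget, each $V_i$ being the convex hull of one fixed small open disk with a far-away point in a distinct direction, so that the pairwise intersections are exactly that disk. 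Then, inside each $V_i$, I recursively carve out the sub-code consisting of the codewords of $\mathcal{C}$ contained in $\sigma_i$ --- a code with a single maximal codeword --- performing the carving in the part of $V_i$ lying outside the other $V_j$, so that the codewords newly created are exactly the intended ones.

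The step I expect to be the main obstacle is the interaction between the two layers: I must guarantee that refining inside a single $V_i$ neither destroys a required overlap nor creates an unwanted ``mixed'' codeword --- one not contained in any single $\sigma_i$ --- and conversely that all the mixed codewords forced by Step~1 can be produced simultaneously in the plane. This is exactly where the output of the link analysis is used: the no-local-obstruction hypothesis has to be shown to be precisely strong enough to place every required atom, including in the tight $\partial\Delta^2$ configuration where $\emptyset\in\mathcal{C}$ is forced, and the concluding bookkeeping --- verifying in each case that the constructed arrangement has code equal to $\mathcal{C}$ and nothing more --- is where most of the genuine work sits.
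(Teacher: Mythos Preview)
Your plan diverges from the paper's proof in a substantive way. The paper does \emph{not} attempt a direct planar construction for an arbitrary $\mathcal{C}$ with no local obstructions. Instead it isolates a single combinatorial dichotomy --- whether exactly one of the three sets $(F_i\cap F_j)\setminus F_k$ is empty (the ``Path-of-Facets Condition'') --- and then invokes two results of Cruz \emph{et al.}\ as black boxes. When Path-of-Facets fails, a short link computation (Lemmas~\ref{lem:intersect-2} and~\ref{lem:contractible}) shows that every intersection of facets has non-contractible link, so $\mathcal{C}$ is max-intersection-complete and Proposition~\ref{prop:max-intersection-complete} gives convexity with $\dim\le 2$ immediately. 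When Path-of-Facets holds, the minimal code $\mathcal{C}_{\min}(\Delta)$ has only five nonempty codewords and is realized on the \emph{line} by an explicit interval arrangement (Lemma~\ref{lem:construction}); monotonicity (Proposition~\ref{prop:monotone}) then lifts this to any $\mathcal{C}\supseteq\mathcal{C}_{\min}(\Delta)$ at the cost of one extra dimension. The only genuinely new construction in the paper is that one-dimensional five-interval picture; your enumeration of nerve shapes is replaced by this single binary split.

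Your route could in principle succeed, but it re-derives both black boxes by hand, and the proposal leaves the real obstacle unresolved. The danger in your two-layer scheme is not the bookkeeping of mixed codewords but the convexity of the individual receptive fields: a neuron $j\in\sigma_1\cap\sigma_2$ must have a \emph{single} convex $U_j$, yet your plan carves independently inside $V_1\setminus(V_2\cup V_3)$ and $V_2\setminus(V_1\cup V_3)$, and pieces of $U_j$ created on opposite sides of $V_1\cap V_2$ need not have convex union with what lies in between. The paper sidesteps this entirely because the monotonicity theorem of Cruz \emph{et al.}\ already packages the careful ``slice off a sliver for each extra codeword'' argument that keeps every $U_j$ convex. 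If you insist on avoiding those citations you will essentially have to reprove monotonicity inside your construction; otherwise, recognizing the Path-of-Facets dichotomy and quoting Propositions~\ref{prop:max-intersection-complete} and~\ref{prop:monotone} collapses the argument to a few lines plus one picture.
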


Theorem~\ref{thm:main}
extends a prior result pertaining to the case of a unique maximal codeword~\cite{curto2017}. 
Another case when convexity is equivalent to having no local obstructions is when all codewords have size up to two~\cite{sparse}. 
However, for codewords of size up to three, the equivalence is false: see the codes in~\cite[\S 2.3]{cruz2019open}) and~\cite[Theorem 4.1]{goldrup2020classification}.

Our proofs are combinatorial in nature, and our construction of receptive fields in $\mathbb{R}^2$ is inspired by a similar construction in~\cite{curto2017}.  We also rely on a result of Cruz {\em et al.}, which states that max-intersection-complete codes (that is, codes that contain all possible intersections of maximal codewords) are convex~\cite{cruz2019open}.  

This work is organized as follows.  
In Section~\ref{sec:bkrd}, we recall definitions and previous results. 
We prove Theorem~\ref{thm:main} in Section~\ref{sec:results}, and
then end with a discussion in Section~\ref{sec:discussion}.

\section{Background} \label{sec:bkrd}
In this section, we introduce definitions, notations, and previous results.

\subsection{Neural Codes}
In a biological context, a codeword represents a set of neurons that fire together while no other neurons fire.  A neural code is a set of such codewords. 

\begin{definition} \label{def:code}
A \emph{neural code} $\mathcal{C}$ on $n$ neurons is a set of subsets of $[n]$ (called \emph{codewords}), i.e. $\mathcal{C} \subseteq 2^{[n]}$. A \emph{maximal codeword} of
$\mathcal{C}$ is a codeword that is not properly contained in any other codeword in $\mathcal{C}$.  
A code $\code$ is $\emph{max-intersection-complete}$ if it contains every intersection of two or more maximal codewords of $\mathcal{C}$.
\end{definition}
\begin{definition}
For a neural code $\mathcal{C}$ on \emph{n} neurons, a collection 
$\mathcal{U}= \{U_1,U_2,  \dots , U_n\}$ of subsets of a set $X$ \emph{realizes} $\mathcal{C}$ if a codeword $\sigma$ is in $\mathcal{C}$ if and only if ($\bigcap _{i\in \sigma}$$U_i$) $\smallsetminus$ $\bigcup_{i\notin \sigma}$$U_i$ is nonempty.
By convention,  $U_{\emptyset} := X$. 
\end{definition}

We will assume that all codes contain the empty set, and will always take $X=\mathbb{R}^d$, for some $d$ (see~\cite[Remark 2.19]{new-obs}).

\begin{definition} \label{def:convex}
A neural code $\code$ is \emph{convex} 
    if it can be realized by a set of convex open sets $U_1,U_2, \dots, U_n \subseteq \mathbb{R}^d.$ The smallest value of $d$ for which this is possible is the \emph{minimal embedding dimension} of $\mathcal{C}$, denoted by $\dim(\code)$.
\end{definition}

\begin{exmp} \label{ex:realize}
Consider the code $\mathcal{C} = \{ \textbf{1234}, 12,  3, 4, \emptyset \}$, where the maximal codeword is in bold. 
A convex realization $\mathcal{U}= \{U_1,U_2,U_3,U_4\}$ of this code is depicted in Figure~\ref{fig:code_1234}.
\begin{figure}[H]
    \begin{center}
        \begin{tikzpicture}[scale = .6]

        \draw (2.6,-1.5) to [bend left = 30] (0,-3) to [bend left = 30] (-2.6,-1.5) to (0, 3) -- cycle [thick, draw = green, fill = green!50, opacity = .7];
        \draw (0,3) to [bend left = 47] (3,0) to [bend left = 16] (2.6,-1.5) to (-2.6, -1.5) -- cycle [thick, draw = blue, fill = blue!30, opacity = .5];
        \draw (0,3) to [bend right = 47] (-3,0) to [bend right = 16] (-2.6,-1.5) to (2.6, -1.5) -- cycle [thick, draw = yellow, fill = yellow!30, opacity = .7];
        \draw (0,3) to [bend right = 47] (-3,0) to [bend right = 16] (-2.6,-1.5) to (2.6, -1.5) -- cycle [thick, pattern=north west lines, pattern color=red, draw = red, opacity = .7];
        
        \node at (0,0) {$1234$};
        \node at (0,-2.25) {$4$};
        \node at (-2,1) {$12$};
        \node at (2,1) {$3$};
        
        \node[draw, text width=0.11\linewidth,inner sep=2mm,align=left,
                 below left] at (8, 2.3)
                {\color{yellow} \textbf{-----} \color{black} : ${U}_1$ \par
                \color{red} \textbf{- - -} \color{black} : ${U}_2$ \par 
                \color{blue} \textbf{-----} \color{black} : ${U}_3$ \par
                \color{green} \textbf{-----} \color{black} : ${U}_4$ \par};
        \end{tikzpicture}
    \end{center}
    \caption{Convex realization of the code $\mathcal{C} = \{ \textbf{1234}, 12, 3, 4, \emptyset \}$. \label{fig:code_1234}
    }
    \end{figure}
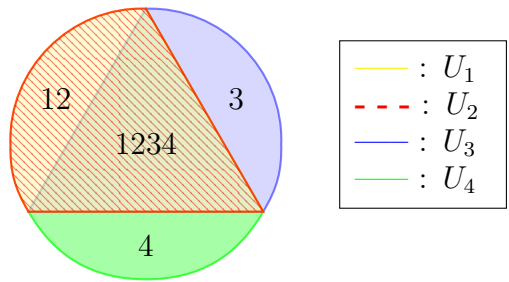
\end{exmp}

We recall the following result of Cruz {\em et al.}~\cite[Theorem 1.2]{cruz2019open}.

\begin{prop}[Max-intersection-complete $\Rightarrow$ convex] \label{prop:max-intersection-complete}
If $\mathcal{C}$ is a max-intersection-complete code with exactly $k$ maximal codewords, then $\mathcal{C}$ 
is 
convex and 
$\dim(\code) \leq \max \{2, k-1 \}$.
\end{prop}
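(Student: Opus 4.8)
The plan is to prove the proposition by exhibiting an explicit convex realization of $\mathcal{C}$ in $\mathbb{R}^{\max\{2,\,k-1\}}$. Write the maximal codewords as $M_1,\dots,M_k$, and for each neuron $i\in[n]$ record the set $A_i=\{\,j\in[k]:i\in M_j\,\}$ of maximal codewords containing it. The first step is to build a \emph{scaffold} realizing the smallest max-intersection-complete code with these maximal codewords, namely $\mathcal{C}_0=\{\emptyset\}\cup\{\,\bigcap_{j\in T}M_j:\emptyset\neq T\subseteq[k]\,\}$. To do this I would fix the vertices $x_1,\dots,x_k$ of a $(k-1)$-simplex in general position and take each $U_i$ to be a thin open convex neighborhood of the face $\mathrm{conv}\{x_j:j\in A_i\}$. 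Near the relative interior of the face spanned by $\{x_j:j\in T\}$, the sets covering that point are exactly the $U_i$ with $T\subseteq A_i$, i.e. the $i\in\bigcap_{j\in T}M_j$ (using that faces of a simplex meet only along $\mathrm{conv}\{x_j:j\in A_i\cap T\}$), while far from the simplex no set fires, yielding $\emptyset$. Thus the scaffold realizes $\mathcal{C}_0$ in dimension $k-1$, once one checks that ``thin enough'' tubes plus general position prevent stray overlaps away from the faces.

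The second step is to upgrade the scaffold to realize the full code $\mathcal{C}$, whose remaining codewords are proper subsets of the intersection codewords already present. Since convexity forbids carving holes, such a codeword $\tau\subsetneq c$ (with $c\in\mathcal{C}_0$) must be produced on the \emph{fringe} of the region realizing $c$: by adjusting the sizes and shapes of the $U_i$ — shrinking those with $i\in c\smallsetminus\tau$ so a sliver near $\partial U_i$ drops them out — one arranges a small open pocket where exactly the neurons of $\tau$ fire. This is precisely the mechanism used for a single maximal codeword in~\cite{curto2017}, and I would adapt that local construction inside each face-region of the scaffold, nesting the fringe arrangements for codewords comparable in the containment order and keeping the refinements for distinct faces disjoint.

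Two dimension-counting points then pin down the bound $\max\{2,k-1\}$. When $k-1\ge 2$ there is transverse room inside each face-region of the $(k-1)$-simplex to perform the fringe refinements without leaving $\mathbb{R}^{k-1}$; when $k\le 2$ the single-maximal-codeword refinement can itself require two dimensions, which is exactly why the bound reads $\max\{2,k-1\}$ rather than $k-1$, and why the small cases are padded into $\mathbb{R}^2$.

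The step I expect to be the main obstacle is the second one: ensuring that the local, convexity-preserving modifications simultaneously create \emph{every} codeword of $\mathcal{C}$ while introducing \emph{none} outside $\mathcal{C}$, all within the stated dimension. Equivalently, the difficulty is to reconcile the global intersection geometry forced by the maximal codewords — handled cleanly by the simplex — with the fine sub-codeword structure inside each region, a tension that max-intersection-completeness is exactly what is needed to resolve.
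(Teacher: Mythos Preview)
The paper does not supply a proof of this proposition: it is stated as a recalled result, attributed to Cruz \emph{et al.}\ (their Theorem~1.2 in~\cite{cruz2019open}), with no argument given here. So there is no proof in the present paper to compare your sketch against.

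For what it is worth, your outline is close in spirit to the cited construction: that argument also anchors the $k$ maximal codewords at points in general position in $\mathbb{R}^{k-1}$, builds each $U_i$ from (a neighborhood of) the convex hull of the points indexed by $A_i=\{j:i\in M_j\}$, and then performs local boundary modifications to insert the remaining non-maximal codewords one at a time. You correctly flag the second step as the crux, and your sketch stops at a plan there rather than a proof---in particular, you have not verified that the fringe refinements for different codewords can be carried out simultaneously without producing unintended intersections or destroying convexity. Since the present paper only cites the result, your write-up already goes beyond what the paper itself provides, but it would need that step made rigorous to stand on its own.
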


\subsection{Simplicial Complexes}

\begin{definition} 
An abstract \emph{simplicial complex} on $n$ vertices is a nonempty set of subsets (\emph{faces}) of $[n]$ that is closed under taking subsets. \emph{Facets} are the faces of a simplicial complex that are maximal with respect to inclusion. 
\end{definition}

For a code $\mathcal{C}$ on $\emph{n}$ neurons, $\Delta$($\mathcal{C}$) is the smallest simplicial complex on $[n]$ that contains $\mathcal{C}$: 

$$\Delta(\mathcal{C}) ~:=~ \{\omega\subseteq [n] \mid \omega\subseteq\sigma \mbox{ for some } \sigma\in\mathcal{C}\}.
$$
Note that two codes on $n$ neurons have the same simplicial complex $\Delta$ if and only if they have the same maximal codewords (which are the facets of $\Delta$).

We recall the following monotonicity result of Cruz {\em et al.}, which states that adding non-maximal codewords to a code preserves convexity~\cite[Theorem~1.3]{cruz2019open}. 

\begin{prop}[Convexity is monotone] \label{prop:monotone}
Let $\code$ and $\mathcal{D}$ be neural codes such that $\code \subseteq \mathcal{D} \subseteq \Delta(\code)$. 
If $\code$ is convex, then 
$\mathcal{D}$ is also convex and $\dim(\mathcal{D}) \leq \dim(\code) +1$.
\end{prop}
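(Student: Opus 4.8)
The plan is to start from a convex open realization $\mathcal{U}=\{U_1,\dots,U_n\}$ of $\code$ in $\mathbb{R}^d$, where $d=\dim(\code)$, and to build from it a convex realization of $\mathcal{D}$ in $\mathbb{R}^{d+1}$. The first observation is that, since $\code\subseteq\mathcal{D}\subseteq\Delta(\code)$, the two codes have the same maximal codewords (equivalently $\Delta(\mathcal{D})=\Delta(\code)$), so every codeword $\tau\in\mathcal{D}\smallsetminus\code$ is a non-facet face: there is a maximal codeword $\sigma\supseteq\tau$ with $\sigma\in\code$. I would first record a fact about facet atoms: for a facet $\sigma$, the atom $A_\sigma=\bigcap_{i\in\sigma}U_i\smallsetminus\bigcup_{j\notin\sigma}U_j$ is open; in fact every point of $A_\sigma$ lies at positive distance from $\overline{U_j}$ for all $j\notin\sigma$, since otherwise a nearby point would realize a codeword strictly containing $\sigma$, contradicting maximality. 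Hence over each such facet there is a small ball $B\subseteq A_\sigma$ on which the only active sets are exactly $\{U_i : i\in\sigma\}$.

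For the construction, I would write the new coordinate as $t$ and thicken the base realization to slabs $U_i\times(0,1)$, which realizes $\code$ in the layer $0<t<1$ while leaving the region $t\ge 1$ free. For each new codeword $\tau$ I would fix a facet $\sigma\supseteq\tau$ and a ball $B\subseteq A_\sigma$ as above, choosing the balls for distinct new codewords to be pairwise disjoint so that the modifications can all be carried out in the \emph{same} extra dimension; this is what forces the bound to be $\dim(\mathcal{D})\le\dim(\code)+1$ rather than growing with $|\mathcal{D}\smallsetminus\code|$. Over $B$ the extra dimension is used to create a single new atom whose codeword is exactly $\tau$: the sets indexed by $\tau$ are extended upward in $t$ over $B$, the sets indexed by $\sigma\smallsetminus\tau$ are capped inside the slab, and the sets indexed by $j\notin\sigma$ never reach the column over $B$. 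The two things to verify are then (i) each resulting set $V_i$ is convex and open, and (ii) the realized code is exactly $\mathcal{D}$, meaning that the region over $B$ realizes $\tau$ and nothing else, that no codeword of $\code$ is destroyed, and that no codeword outside $\mathcal{D}$ is created.

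The main obstacle is reconciling (i) and (ii). Convexity of a single $V_i$ is rigid: one cannot graft a narrow tall ``finger'' over $B$ onto the wide slab $U_i\times(0,1)$ and keep $V_i$ convex, and any convex set extending the slab upward must pass through wide ramp cross-sections that differ between the sets $\{V_i : i\in\tau\}$, producing spurious ``fringe'' codewords that are proper subsets of $\tau$ and need not lie in $\mathcal{D}$. The crux is therefore to arrange the upward extension so that the sets indexed by $\tau$ coincide throughout the new region, eliminating fringes, while each $V_i$ remains a single convex set. Localizing every modification to a facet atom $A_\sigma$ is precisely what makes this controllable: over $B$ the ambient codeword is the single facet $\sigma$, so any codeword produced by turning sets on or off in the column over $B$ is forced to be a face of $\sigma$, and the extension can be designed so that the only new such face that appears is $\tau$ itself, with the sets of $\sigma\smallsetminus\tau$ shielding the transition. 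I expect this local, fringe-free extension to be the technical heart of the argument; once it is in place, checking that the slab layer still realizes exactly $\code$ and that the gadgets over disjoint balls do not interact is routine.
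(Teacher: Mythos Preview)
The paper does not prove this proposition; it is quoted as Theorem~1.3 of Cruz \emph{et al.}\ and only used as a black box. So there is no ``paper's own proof'' to match, though Example~3.5 sketches the Cruz \emph{et al.}\ construction informally: fatten the realization into $\mathbb{R}^{d+1}$, intersect with an open ball, and then for each new non-maximal codeword $\tau\subseteq\sigma$ \emph{slice off} a small cap of the $\sigma$-region with a hyperplane, removing from that cap the sets indexed by $\sigma\smallsetminus\tau$.

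Your outline has the right global shape (thicken, localize to a facet atom, modify over disjoint balls so one extra dimension suffices), but you leave the decisive step unresolved and in fact point in a harder direction. You propose to \emph{extend upward} the sets $V_i$ for $i\in\tau$ over the column $B\times[1,\infty)$ while capping the sets for $i\in\sigma\smallsetminus\tau$; you then correctly observe that grafting a narrow finger onto a wide slab destroys convexity, and that tapering ramps for different $i\in\tau$ will not coincide and hence create spurious proper subsets of $\tau$. You say you ``expect'' a fringe-free convex extension exists, but you do not produce one, and there is no evident way to make the $V_i$ for $i\in\tau$ agree on the new region while each remains convex, since the base sets $U_i$ differ.

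The fix, and the actual content of the cited result, is to go the other way: leave the sets for $i\in\tau$ untouched and instead \emph{shrink} the sets for $j\in\sigma\smallsetminus\tau$ by intersecting each $V_j$ with an open half-space whose bounding hyperplane cuts off a tiny cap inside the (bounded) $\sigma$-atom. Intersection with a half-space preserves convexity automatically, the cap realizes exactly $\tau$, and because the cap is interior to $A_\sigma$ no set $V_k$ with $k\notin\sigma$ meets it. This ``slice, don't extend'' idea is precisely the missing ingredient in your proposal.
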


\begin{definition}
For a face $\sigma\:\in\:\Delta,$ the \emph{link} of $\sigma$ in $\Delta$ is the simplicial complex: 
\[
{\rm Lk}_{\sigma}(\Delta) ~:=~ \{
     \omega \subseteq \Delta \mid \sigma \cap \omega = \emptyset ~{\rm and}~ \sigma \cup \omega \in \Delta \}~.
\]
\end{definition}

\begin{exmp} \label{ex:simplicial-cpx} 
Consider the neural code $\mathcal{C} = \{ {\bf 1356, 123, 124 }, 12, 13, 3, \emptyset \}$.  
The simplicial complex $\Delta=\Delta(\code)$ has facets $\{1356, 123, 124\}$.  Depicted in Figure~\ref{fig:s-cpx} are $\Delta$ and the link ${\rm Lk}_{\{1\}} (\Delta) $ of the triplewise intersection $1356\cap 123 \cap 124= 1$.
\begin{figure}[ht]
    \begin{center}
    \begin{tikzpicture}[scale=.5]
    \draw(-2,-1) -- (0,-2)-- (0,0)-- cycle[thick, draw = red, fill = red!30, opacity = .5];
    \draw (0,0) -- (2,-1) -- (0,-2) [draw = black, thick, fill = black!30, opacity = .7];
    \draw (0,0) -- (2,3)-- (4,0)-- (2,-1) -- (2,3) [draw = blue, thick, fill = blue!30, opacity = .3];
    \filldraw (0,0) -- (2,3) -- (2,-1) -- cycle [draw = blue, thick, fill = blue!30, opacity = .3];
    \draw [dashed](0,0) -- (4,0);
    \filldraw (0,0) -- (4,0) -- (2,-1) -- cycle [draw = blue, thick, fill = blue!30, opacity = .3];
     
    \node at (-2.5,-1) {4};   
    \node at (-.5,.5) {1};
    \node at (0,-2.5) {2};
    \node at (2,-1.5) {3};
    \node at (2,3.5) {6};
    \node at (4.5,0) {5};
    
    
    \draw(8,-1) -- (10,-2)[thick, draw = red, fill = red!30, opacity = .5];
    \draw (12,-1) -- (10,-2) [draw = black, thick, fill = black!30, opacity = .7];
    \draw (12,3)-- (14,0)-- (12,-1) -- cycle [draw = blue, thick, fill = blue!30, opacity = .3];
     
    \node at (7.5,-1) {4}; 
    \node at (10,-2.5) {2};
    \node at (12,-1.5) {3};
    \node at (12,3.5) {6};
    \node at (14.5,0) {5};
    
    \end{tikzpicture}
    \end{center}
    \caption{(Left) The simplicial complex 
    $\Delta$ with facets $\{1356, 123, 124\}$; (Right) The link ${\rm Lk}_{\{1\}} (\Delta) $. \label{fig:s-cpx}
    }
\end{figure}
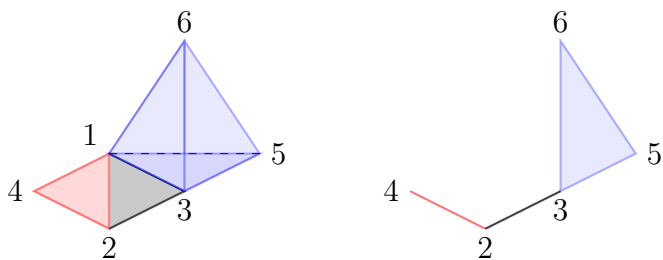
\end{exmp}

Recall that a set is \emph{contractible} if it is homotopy-equivalent to a single point. 
We see in Figure~\ref{fig:s-cpx} that ${\rm Lk}_{\{1\}} (\Delta)$ is contractible.  In this example, the codeword $1$ is the intersection of three facets.  Next, we recall 
what happens when only two facets intersect; now the link is non-contractible, as follows~\cite[Lemma 4.7]{curto2017}.

\begin{lem}
\label{lem:intersect-2}
Let $\sigma$ be a face of a simplicial complex $\Delta$. If $\sigma = \tau_1 \cap \tau_2$, where $\tau_1$ and $\tau_2$ are distinct facets of $\Delta$, and $\sigma$ is not contained in any other facet, then ${\rm Lk}_\sigma(\Delta)$ is not contractible.
\end{lem}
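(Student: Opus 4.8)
The plan is to describe ${\rm Lk}_\sigma(\Delta)$ explicitly and then observe that it is disconnected, hence not contractible. Set $A := \tau_1 \setminus \sigma$ and $B := \tau_2 \setminus \sigma$. The first step is to record two elementary facts about these sets. They are disjoint, since $A \cap B = (\tau_1 \cap \tau_2)\setminus \sigma = \sigma \setminus \sigma = \emptyset$ (using $\sigma = \tau_1 \cap \tau_2$). And they are both nonempty: if, say, $A = \emptyset$, then $\tau_1 \subseteq \tau_2$, which is impossible for two distinct facets.

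The second step is to show that ${\rm Lk}_\sigma(\Delta) = \{\omega \mid \omega \subseteq A\} \cup \{\omega \mid \omega \subseteq B\}$, i.e., the link is the union of the full simplex on vertex set $A$ and the full simplex on vertex set $B$. The containment "$\supseteq$" is direct: if $\omega \subseteq A$, then $\omega \cap \sigma = \emptyset$ and $\sigma \cup \omega \subseteq \sigma \cup A = \tau_1 \in \Delta$, so $\sigma \cup \omega \in \Delta$ since $\Delta$ is closed under subsets; the case $\omega \subseteq B$ is symmetric. For "$\subseteq$", suppose $\omega \cap \sigma = \emptyset$ and $\sigma \cup \omega \in \Delta$. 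Then $\sigma \cup \omega$ is contained in some facet $\tau$ of $\Delta$; in particular $\sigma \subseteq \tau$, so by hypothesis $\tau \in \{\tau_1, \tau_2\}$, and hence $\omega \subseteq \tau \setminus \sigma \in \{A, B\}$.

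The final step is to conclude. Since $A \cap B = \emptyset$, the two full simplices appearing above share no vertices, so the geometric realization $|{\rm Lk}_\sigma(\Delta)|$ is a disjoint union of two nonempty simplices, each of which is contractible; in particular it has (at least) two connected components. A contractible space is path-connected, so ${\rm Lk}_\sigma(\Delta)$ cannot be contractible. (Equivalently, $\widetilde{H}_0\big({\rm Lk}_\sigma(\Delta)\big) \neq 0$.) The only point requiring care is the "$\subseteq$" direction in the second step, which is exactly where the hypothesis that $\sigma$ lies in no facet other than $\tau_1$ and $\tau_2$ is used: without it the link could acquire extra faces gluing the two simplices together and become connected. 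There is otherwise no serious obstacle.
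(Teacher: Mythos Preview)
Your proof is correct. The paper does not actually prove this lemma; it is quoted from \cite[Lemma~4.7]{curto2017} without argument, so there is no proof in the paper to compare against directly.

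That said, your approach is closely related to the machinery the paper sets up just afterward. Lemma~\ref{lem:LSW} states that ${\rm Lk}_\sigma(\Delta)$ is homotopy-equivalent to the nerve of $\mathcal{L}_\sigma(\Delta) = \{\tau_1\smallsetminus\sigma,\ \tau_2\smallsetminus\sigma\} = \{A,B\}$. Since $A\cap B=\emptyset$, that nerve is two isolated vertices, hence not contractible. Your argument bypasses the nerve lemma by computing the link itself and observing it is literally the disjoint union of the full simplices on $A$ and on $B$; this is more elementary and gives the same disconnectedness conclusion. The paper's Lemma~\ref{lem:LSW} route would generalize more readily (and is what the paper uses in the three-facet case in Lemma~\ref{lem:contractible}), but for exactly two facets your direct computation is cleaner.
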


To state another useful lemma concerning links, we need the following definition.

\begin{definition} \label{def:nerve}
For a collection of subsets  $\mathcal{W} = \{W_1, W_2, \dots, W_n \}$ of a set ${X}$, the \emph{nerve} of $\mathcal{W}$ is the simplicial complex that records the intersection patterns among the sets: 
\[
\mathcal{N}(\mathcal{W}) := \left\{ I \subseteq [n] \mid \bigcap_{i \in I} W_i \text{ is nonempty} \right\}.
\]
\end{definition}

Lienkaemper {\em et al.}\ 
used the nerve lemma to prove the following result~\cite[Equation~(2)]{Lienkaemper_2017}.

\begin{lem} 
\label{lem:LSW}
Let $\sigma$ be a face of a simplicial complex $\Delta$, and
let $\mathcal{L}_\sigma (\Delta)$ be the set of facets of the link ${\rm Lk}_\sigma (\Delta)$:
\[
	\mathcal{L}_{\sigma} (\Delta) ~=~ \{ (M \smallsetminus \sigma) \mid M \textrm{ is a facet of } \Delta \textrm{ that contains } \sigma \}~.
\]
Then the following homotopy-equivalence holds: ${\rm Lk}_\sigma(\Delta) \simeq \mathcal{N}(\mathcal{L}_\sigma(\Delta))$. 
\end{lem}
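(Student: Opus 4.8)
The plan is to realize $K := {\rm Lk}_\sigma(\Delta)$ as the union of the closed simplices spanned by its facets and then apply the Nerve Lemma. First I would record that the facets of $K$ are exactly the members of $\mathcal{F} := \mathcal{L}_\sigma(\Delta)$: if $\omega$ is a face of $K$, then $\sigma \cup \omega \in \Delta$, so $\sigma \cup \omega$ lies in some facet $M$ of $\Delta$; since $\sigma \subseteq M$ and $\omega \cap \sigma = \emptyset$ we get $\omega \subseteq M \smallsetminus \sigma \in \mathcal{F}$, and conversely each $M \smallsetminus \sigma$ with $M \supseteq \sigma$ a facet of $\Delta$ is a face of $K$, maximal for the same reason. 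Writing $\overline{F}$ for the full simplex $2^{F}$ on vertex set $F$ (a subcomplex of $K$), this gives the cover $K = \bigcup_{F \in \mathcal{F}} \overline{F}$.

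The key point is the behaviour of intersections of this cover. For any nonempty $I \subseteq \mathcal{F}$ one has $\bigcap_{F \in I}\overline{F} = 2^{\,\bigcap_{F\in I} F}$, since a set is a subset of every $F \in I$ precisely when it is a subset of $\bigcap_{F \in I} F$. Passing to geometric realizations (where the realization of an intersection of subcomplexes is the intersection of their realizations, and distinct simplices of a complex meet only in common faces), this intersection is a single simplex when $\bigcap_{F \in I} F \neq \emptyset$ and is empty when $\bigcap_{F \in I} F = \emptyset$. In particular, every nonempty finite intersection of members of the cover $\{\overline{F} \mid F \in \mathcal{F}\}$ is a simplex, hence contractible, so this is a good cover of $K$ by subcomplexes.

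The Nerve Lemma --- in the version for a cover of a (finite) simplicial complex by subcomplexes all of whose nonempty intersections are contractible, which is the form that applies to a closed cover by subcomplexes --- then yields $K \simeq \mathcal{N}\bigl(\{\overline{F} \mid F \in \mathcal{F}\}\bigr)$. It remains to identify this nerve with $\mathcal{N}(\mathcal{L}_\sigma(\Delta))$: by Definition~\ref{def:nerve}, an index set $I$ is a face of $\mathcal{N}\bigl(\{\overline{F}\}_{F\in\mathcal{F}}\bigr)$ iff $\bigcap_{F \in I}\overline{F} \neq \emptyset$, which by the displayed computation holds iff $\bigcap_{F \in I} F \neq \emptyset$, i.e. iff $I$ is a face of $\mathcal{N}(\mathcal{L}_\sigma(\Delta))$. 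Hence ${\rm Lk}_\sigma(\Delta) \simeq \mathcal{N}(\mathcal{L}_\sigma(\Delta))$, as claimed.

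The main obstacle is invoking the correct form of the Nerve Lemma: the textbook statement is for open covers of a space, whereas the natural object here is the closed cover of $|K|$ by the realizations of the subcomplexes $\overline{F}$, so one needs the simplicial (subcomplex) version --- whose hypothesis is exactly that all nonempty intersections be contractible, which is why the identity $\bigcap_{F\in I}\overline{F} = 2^{\bigcap_{F\in I} F}$ carries the argument. A secondary, purely bookkeeping matter is the degenerate case in which $\sigma$ is itself a facet (or lies in a unique facet): there ${\rm Lk}_\sigma(\Delta)$ and $\mathcal{N}(\mathcal{L}_\sigma(\Delta))$ are easily compared by hand (both a point, or both the empty space), so this can be disposed of in a sentence rather than disrupting the main line of reasoning.
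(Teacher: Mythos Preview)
Your argument is correct and follows exactly the route the paper indicates: the paper does not prove this lemma but attributes it to Lienkaemper \emph{et al.}, noting only that they ``used the nerve lemma,'' which is precisely what you do by covering $|{\rm Lk}_\sigma(\Delta)|$ with the closed simplices on its facets and checking that all nonempty intersections are again simplices. Your identification of the nerve of this cover with $\mathcal{N}(\mathcal{L}_\sigma(\Delta))$ via $\bigcap_{F\in I}\overline{F}=2^{\bigcap_{F\in I}F}$ is the right bookkeeping step, and your remarks on the closed/simplicial version of the Nerve Lemma and the degenerate cases are appropriate caveats.
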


We will use Lemma~\ref{lem:LSW} 
to analyze the case when $\sigma$ is the intersection of three facets.

\subsection{Local Obstructions}

The following definition is equivalent to the standard one~\cite{curto2017}.

\begin{definition} \label{def:local-obs}
A neural code $\code$ with simplicial complex $\Delta = \Delta(\mathcal{C})$ has a \emph{local obstruction} if there exists a nonempty face $\sigma \in \Delta$ such that the following hold:
\begin{enumerate}
    \item 
    $\sigma$ is the intersection of two or more facets of $\Delta$, 
    \item 
    the link ${\rm Lk}_\sigma(\Delta)$ is not contractible, 
and 
    \item 
    $\sigma \notin \code$.
\end{enumerate}
\end{definition}

The following result is due to Giusti and Itskov~\cite{giusti_itskov2014}.
\begin{prop} 
\label{prop:local-obs}
Every convex neural code has no local obstructions.
\end{prop}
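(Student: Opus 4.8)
The plan is to argue directly from a convex realization that condition~(2) of Definition~\ref{def:local-obs} must fail whenever condition~(3) holds; since all three conditions must hold simultaneously for a local obstruction, this rules them out. Concretely, I will fix a convex realization and show that every nonempty face $\sigma\in\Delta$ with $\sigma\notin\code$ has contractible link ${\rm Lk}_\sigma(\Delta)$.

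So let $\mathcal{U}=\{U_1,\dots,U_n\}$ be a realization of $\code$ by convex open sets in $\mathbb{R}^d$, and fix a nonempty $\sigma\in\Delta$ with $\sigma\notin\code$. Set $U_\sigma:=\bigcap_{i\in\sigma}U_i$. Because $\sigma\in\Delta=\Delta(\code)$, the face $\sigma$ lies in some codeword $c\in\code$, whose nonempty atom is contained in $U_\sigma$; hence $U_\sigma$ is nonempty, and being an intersection of convex open sets it is convex, open, and in particular contractible.

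The heart of the argument is to build a good cover of $U_\sigma$ from the remaining neurons. Since $\sigma\notin\code$, the atom $U_\sigma\smallsetminus\bigcup_{j\notin\sigma}U_j$ is empty, i.e.\ $U_\sigma\subseteq\bigcup_{j\notin\sigma}U_j$. Thus the open convex sets $\mathcal{V}=\{\,U_\sigma\cap U_j \mid j\notin\sigma\,\}$ cover $U_\sigma$. For any $\tau\subseteq[n]\smallsetminus\sigma$ one has $\bigcap_{j\in\tau}(U_\sigma\cap U_j)=\bigcap_{i\in\sigma\cup\tau}U_i$, again a convex open set, hence empty or contractible; so $\mathcal{V}$ is a good cover and the nerve lemma gives $\mathcal{N}(\mathcal{V})\simeq U_\sigma$, which is contractible. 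Finally, $\bigcap_{i\in\sigma\cup\tau}U_i\neq\emptyset$ exactly when $\sigma\cup\tau\in\Delta$, which---since $\tau\cap\sigma=\emptyset$---is precisely the condition $\tau\in{\rm Lk}_\sigma(\Delta)$; comparing with Definition~\ref{def:nerve} shows $\mathcal{N}(\mathcal{V})={\rm Lk}_\sigma(\Delta)$. Therefore ${\rm Lk}_\sigma(\Delta)$ is contractible, as claimed, and no nonempty $\sigma\notin\code$ can satisfy condition~(2).

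The step I expect to require the most care is verifying that $\mathcal{V}$ is a \emph{good} cover and invoking the nerve lemma: this is exactly where convexity is indispensable, since it forces every finite intersection of the $U_i$ to be convex and therefore contractible or empty. The remaining work---checking $U_\sigma\neq\emptyset$ from $\sigma\in\Delta$, and identifying $\mathcal{N}(\mathcal{V})$ with ${\rm Lk}_\sigma(\Delta)$---is routine combinatorial bookkeeping, but the latter identification must be written out carefully to confirm that the vertex sets and face relations match.
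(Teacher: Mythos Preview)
Your argument is correct and is essentially the standard nerve-lemma proof due to Giusti and Itskov. Note, however, that the paper does not actually prove Proposition~\ref{prop:local-obs}: it is stated as a known result and attributed to \cite{giusti_itskov2014}, with no proof given. So there is no ``paper's own proof'' to compare against; what you have written is (a clean version of) the original argument.

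One small point worth making explicit when you write it up: the identification $\mathcal{N}(\mathcal{V})={\rm Lk}_\sigma(\Delta)$ uses the fact that for any $S\subseteq[n]$, the intersection $\bigcap_{i\in S}U_i$ is nonempty if and only if $S\in\Delta(\code)$. You invoke one direction of this (to get $U_\sigma\neq\emptyset$), but both directions are needed to match faces of the nerve with faces of the link. This is immediate from the definition of a realization, but since you flagged the identification as requiring care, you should state it.
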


The \emph{minimal code} of a simplicial complex $\Delta$, denoted by $\code_{\rm min}(\Delta)$,
consists of: 
\begin{enumerate}
    \item 
    all facets of $\Delta$, 
    \item 
    all faces $\sigma\in \Delta$ that are the 
 intersection of two or more facets of $\Delta$, such that ${\rm Lk}_\sigma(\Delta)$ is not contractible, and 
    \item 
 the empty set.  
\end{enumerate}
By Proposition~\ref{prop:local-obs}, 
 $\code_{\rm min}(\Delta)$ 
 is the unique minimal (with respect to inclusion) code among all codes with simplicial complex $\Delta$ and no local obstructions.

\begin{exmp} \label{ex:min-code}
For the simplicial complex~$\Delta$ in Example~\ref{ex:simplicial-cpx}, 
the minimal code is $\code_{\rm min}(\Delta) = \{ {\bf 1356, 123, 124 }, 12, 13, \emptyset \}$.  
\end{exmp}

As detailed in the Introduction, the converse of Proposition~\ref{prop:local-obs} is false~\cite{Lienkaemper_2017}, but true in some cases, such as when all codewords have size at most two~\cite{sparse} or for codes on up to four neurons~\cite{curto2017}.  
Our main result, Theorem~\ref{thm:main}, gives another such class, a special case of which is as follows.
\begin{prop}[Convexity for codes with up to two maximal codewords] \label{prop:2-max}
Assume $\mathcal{C}$ is a neural code with exactly one or two maximal codewords. Then 
the following are equivalent:
\begin{itemize}
    \item $\code$ is convex,
    \item $\code$ has no local obstructions, and
    \item $\code$ is max-intersection-complete.
\end{itemize}
Also, if $\code$ is convex, then $\dim(\code) \leq 2$.
\end{prop}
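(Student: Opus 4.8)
The plan is to dispatch the proposition by cases on the number of maximal codewords, reducing in each case to an inspection of the single candidate face for a local obstruction.

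First I would handle the case of a \emph{unique} maximal codeword. Then $\Delta := \Delta(\code)$ has exactly one facet, so no face is an intersection of two or more facets; hence $\code$ is vacuously max-intersection-complete and (by failure of condition~(1) of Definition~\ref{def:local-obs}) has no local obstruction, while Proposition~\ref{prop:max-intersection-complete} with $k = 1$ gives that $\code$ is convex with $\dim(\code) \le \max\{2,0\} = 2$. So all three properties hold simultaneously, the claimed equivalence is vacuous, and the dimension bound holds.

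Next I would treat \emph{exactly two} maximal codewords $\sigma_1, \sigma_2$, and set $\tau := \sigma_1 \cap \sigma_2$. Since the facets of $\Delta$ are exactly $\sigma_1$ and $\sigma_2$ and neither contains the other, $\tau$ is the \emph{only} intersection of two or more facets, hence the only nonempty face that can witness a local obstruction and the only nontrivial intersection of maximal codewords; thus $\code$ is max-intersection-complete exactly when $\tau \in \code$ or $\tau = \emptyset$. When $\tau = \emptyset$, all three properties hold: $\emptyset \in \code$ makes $\code$ max-intersection-complete, there is no local obstruction (a witness must be nonempty), and Proposition~\ref{prop:max-intersection-complete} with $k = 2$ gives convexity with $\dim(\code) \le 2$. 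When $\tau \ne \emptyset$: if $\tau \in \code$, Proposition~\ref{prop:max-intersection-complete} again yields convexity with $\dim(\code) \le 2$, and Proposition~\ref{prop:local-obs} yields no local obstruction; if $\tau \notin \code$, then since $\tau = \sigma_1 \cap \sigma_2$ with $\sigma_1 \ne \sigma_2$ and $\tau$ lies in no other facet, Lemma~\ref{lem:intersect-2} shows ${\rm Lk}_\tau(\Delta)$ is not contractible, so $\tau$ witnesses a local obstruction, and by Proposition~\ref{prop:local-obs} $\code$ is not convex.

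Assembling these observations closes the cycle: max-intersection-complete $\Rightarrow$ convex (Proposition~\ref{prop:max-intersection-complete}), convex $\Rightarrow$ no local obstruction (Proposition~\ref{prop:local-obs}), and no local obstruction $\Rightarrow$ max-intersection-complete (the contrapositive of the observation that $\tau \ne \emptyset$ and $\tau \notin \code$ force a local obstruction). Hence the three statements are equivalent, and the bound $\dim(\code) \le 2$ follows because a convex code of this form is max-intersection-complete, to which Proposition~\ref{prop:max-intersection-complete} applies. The only points needing care are the elementary bookkeeping that $\tau$ is truly the unique candidate face and the degenerate subcase $\tau = \emptyset$; the one substantive input is Lemma~\ref{lem:intersect-2}. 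I do not expect a real obstacle here: this is the easy, two-facet end of Theorem~\ref{thm:main}, whereas the genuine difficulty in that theorem lies in the three-maximal-codeword case, where Lemma~\ref{lem:LSW} and an explicit planar construction are needed.
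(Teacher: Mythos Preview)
Your proposal is correct and follows exactly the route the paper indicates: the paper's proof simply states that the result ``follows easily from Lemma~\ref{lem:intersect-2}, Propositions~\ref{prop:max-intersection-complete} and~\ref{prop:local-obs}, and Definition~\ref{def:local-obs},'' and your case analysis is precisely the intended unpacking of that sentence. The only thing to note is that your write-up is considerably more detailed than what the paper provides, but the ingredients and logic are identical.
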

\begin{proof}
This result follows easily from Lemma~\ref{lem:intersect-2},  Propositions~\ref{prop:max-intersection-complete} and~\ref{prop:local-obs}, and Definition~\ref{def:local-obs}.
\end{proof}

\section{Main Results} \label{sec:results}
The aim of this section is to prove our main result on codes with up to three maximal codewords (Theorem~\ref{thm:main}).  Our proof requires two preliminary results (Lemmas~\ref{lem:contractible} and~\ref{lem:construction}), 
which pertain to simplicial complexes like the one in Example~\ref{ex:simplicial-cpx}.  Specifically, the three facets of the simplicial complex are arranged in a certain way, which we now define.

Let $\Delta$ be a simplicial complex with exactly three facets $F_1$, $F_2$, and $F_3$.  We say that $\Delta$ satisfies the 
{\em Path-of-Facets Condition}
if 
exactly one of the following three sets is empty (and the other two are nonempty): 
$( F_1 \cap F_2 ) \smallsetminus F_3$,
$( F_1 \cap F_3 ) \smallsetminus F_2$, and
$( F_2 \cap F_3 ) \smallsetminus F_1$. 
The reason behind the name of this condition is shown in the proof of the following lemma.

\begin{lem} \label{lem:contractible}
Let $\Delta$ be a simplicial complex with exactly three facets $F_1$, $F_2$, and $F_3$.  
Then 
${\rm Lk}_{F_1 \cap F_2 \cap F_3} (\Delta)$ is contractible
 if and only if $\Delta$ satisfies the Path-of-Facets Condition.
\end{lem}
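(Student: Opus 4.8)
The plan is to compute the link directly via the nerve, using Lemma~\ref{lem:LSW}. Write $\sigma := F_1 \cap F_2 \cap F_3$, which is a face of $\Delta$ since it is contained in $F_1 \in \Delta$, and set $A_i := F_i \smallsetminus \sigma$ for $i = 1,2,3$. Since $\sigma \subseteq F_i$ for every $i$, all three facets contain $\sigma$, so Lemma~\ref{lem:LSW} gives ${\rm Lk}_\sigma(\Delta) \simeq \mathcal{N}(\{A_1, A_2, A_3\})$, with $A_1, A_2, A_3$ the facets of ${\rm Lk}_\sigma(\Delta)$. First I would record two easy structural facts: each $A_i$ is nonempty (otherwise $F_i = \sigma \subseteq F_j$ for $j \neq i$, contradicting maximality of the facet $F_i$), and the $A_i$ are pairwise distinct (since $A_i \cup \sigma = F_i$ with $A_i \cap \sigma = \emptyset$, an equality $A_i = A_j$ would force $F_i = F_j$). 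Hence the vertex set of $\mathcal{N}(\{A_1,A_2,A_3\})$ is exactly $\{1,2,3\}$.

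Next I would pin down the remaining faces of this nerve. For distinct indices $i, j$ with $k$ the third index, a one-line set computation gives
\[
A_i \cap A_j ~=~ (F_i \cap F_j) \smallsetminus \sigma ~=~ (F_i \cap F_j) \smallsetminus F_k,
\]
the last equality because an element of $F_i \cap F_j$ lies in $\sigma = F_1 \cap F_2 \cap F_3$ exactly when it lies in $F_k$. In particular $A_1 \cap A_2 \cap A_3 = \sigma \smallsetminus \sigma = \emptyset$, so $\{1,2,3\}$ is never a face of $\mathcal{N}(\{A_1,A_2,A_3\})$. Thus the nerve is a graph on the three vertices $1,2,3$, and the edge $\{i,j\}$ belongs to it if and only if $(F_i \cap F_j) \smallsetminus F_k \neq \emptyset$.

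Finally I would read off the homotopy type from the number of edges. Among graphs on three fixed vertices: with zero edges (three isolated points) or one edge (an edge plus an isolated point) the space is disconnected, hence not contractible; with three edges it is a hollow triangle, homotopy-equivalent to $S^1$, hence not contractible; with exactly two edges it is a path on three vertices, a tree, hence contractible. Therefore ${\rm Lk}_\sigma(\Delta)$ is contractible if and only if exactly two of the sets $(F_1 \cap F_2)\smallsetminus F_3$, $(F_1 \cap F_3)\smallsetminus F_2$, $(F_2 \cap F_3)\smallsetminus F_1$ are nonempty, i.e.\ exactly one of them is empty --- which is precisely the Path-of-Facets Condition. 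In the contractible case the three facets then form a path in the nerve, one facet meeting each of the other two outside $\sigma$, which explains the name. I do not expect a substantive obstacle here: the only points requiring care are verifying that the three $A_i$ are genuinely nonempty and distinct, so that the nerve really does have three vertices, together with the bookkeeping identity $(F_i \cap F_j)\smallsetminus\sigma = (F_i \cap F_j)\smallsetminus F_k$; everything else is the standard fact that a graph on three vertices is contractible precisely when it is a path.
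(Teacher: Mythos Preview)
Your argument is correct and follows essentially the same route as the paper: apply Lemma~\ref{lem:LSW} to identify ${\rm Lk}_\sigma(\Delta)$ with the nerve of $\{F_1\smallsetminus\sigma,\,F_2\smallsetminus\sigma,\,F_3\smallsetminus\sigma\}$, observe that the triple intersection is empty so the nerve is a graph on three vertices, and note that among such graphs only the path is contractible. Your additional care in checking that the $A_i$ are nonempty and pairwise distinct, and in verifying the identity $(F_i\cap F_j)\smallsetminus\sigma = (F_i\cap F_j)\smallsetminus F_k$, fills in details the paper leaves implicit but does not change the approach.
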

\begin{proof}
Let $\sigma = F_1 \cap F_2 \cap F_3$.
By Lemma~\ref{lem:LSW}, the link  ${\rm Lk}_\sigma(\Delta)$ 
is homotopy-equivalent to the nerve of 
$	\mathcal{L}_{\sigma} (\Delta) = \{ (F_1 \smallsetminus \sigma), ~(F_2 \smallsetminus \sigma), ~(F_3 \smallsetminus \sigma)\} $.  This nerve does not contain a $2$-simplex (i.e., a filled-in triangle) because the triplewise intersection $\bigcap_{i=1}^3 (F_i \smallsetminus \sigma) $ is empty.  So, the nerve is a graph on three vertices.  
The only such graph that is contractible is the path: 
\begin{center}
    \begin{tikzpicture}[scale=.7]
    \draw (0,0) node[anchor = east, circle, fill, inner sep = 0pt, minimum size = .1 pt,label = below: $F_j \smallsetminus \sigma$, scale = .3]{$a^*$} -- (4,0) node[anchor = west,circle,fill, inner sep = 0pt, minimum size = .1 pt,label = below: $F_k \smallsetminus \sigma$, scale = .3]{$b^*$} -- (8,0) node[anchor = west,circle,fill, inner sep = 0pt, minimum size = .1 pt,label = below: $F_{\ell} \smallsetminus \sigma$, 
    scale = .3]{$b^*$};
    \end{tikzpicture}
\end{center}
(Here, $j,k, \ell$ form a permutation of $1,2,3$.)
We conclude that  ${\rm Lk}_\sigma(\Delta)$ is contractible if and only if $( F_j \cap F_k ) \smallsetminus \sigma \neq \emptyset$,
$( F_k \cap F_{\ell} ) \smallsetminus \sigma \neq \emptyset$, and 
$( F_j \cap F_{\ell} ) \smallsetminus \sigma = \emptyset$ (for some permutation $j,k, \ell$ of $1,2,3$), 
which is 
easily seen to be equivalent to the Path-of-Facets Condition. 
\end{proof}

The next result, Lemma~\ref{lem:construction}, states that,
when the Path-of-Facets Condition holds, 
the minimal code can be realized in $\mathbb{R}$ by convex open sets (i.e., open intervals).  
The proof constructs such a realization, which we illustrate in the following example.

\begin{exmp} \label{ex:1-d}  
Recall from Example~\ref{ex:min-code} that for the simplicial complex $\Delta$ with facets $\{1356, 123, 124 \}$, 
the minimal code is  $\code_{\rm min}(\Delta) = \{ {\bf 1356, 123, 124 }, 12, 13, \emptyset \}$.  
A $1$-dimensional convex realization $\mathcal{U}= \{U_1,U_2,\dots, U_6 \}$ is shown in Figure~\ref{fig:rec-fields}, and the regions defined by this realization are labeled by the corresponding codewords in Figure~\ref{fig:1-d-realization}.

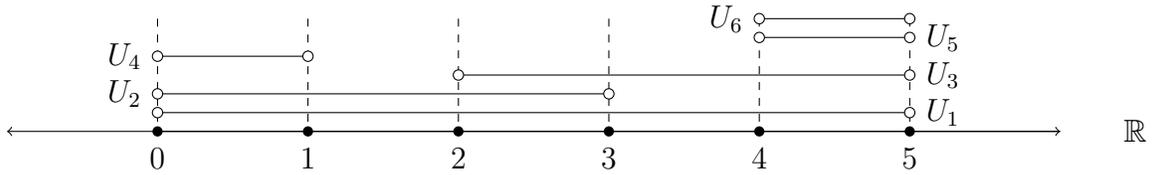
\begin{figure}[ht]
    \centering
    \begin{tikzpicture}[scale = 0.5]
        \draw (0,0) node[circle, fill, inner sep = 0pt, minimum size = .1 pt,label = below: $0$, scale = .4]{0} -- 
        (4,0) node[circle,fill, inner sep = 0pt, minimum size = .1 pt,label = below: 1, scale = .4]{1} --
        (8,0) node[circle,fill, inner sep = 0pt, minimum size = .1 pt,label = below: 2, scale = .4]{2} --
        (12,0) node[circle,fill, inner sep = 0pt, minimum size = .1 pt,label = below: $3$, scale = .4]{3} -- (16,0) node[circle,fill, inner sep = 0pt, minimum size = .1 pt,label = below: $4$, scale = .4]{4} -- 
        (20,0) node[circle,fill, inner sep = 0pt, minimum size = .1 pt,label = below: $5$, scale = .4]{5} -- (24,0) ;
        \draw (26, 1) node[circle, fill = white, label = below: $\mathbb{R}$]{};
        \draw (0,3) -- (0,0) [dashed];
        \draw (4,3) -- (4,0) [dashed];
        \draw (8,3) -- (8,0) [dashed];
        \draw (12,3) -- (12,0) [dashed];
        \draw (16,3) -- (16,0) [dashed];
        \draw (20,3) -- (20,0) [dashed];
        %
       \draw (0,.5) -- (20,.5);
        \draw (0,.5) node[circle,draw=black, fill=white, inner sep = 0pt, minimum size = 0.1 pt, scale = .5]{$~~$};
        \draw (20,.5) node[circle,draw=black, fill=white, inner sep = 0pt, minimum size = 0.1 pt, scale = .5, label = east: ${U}_1$]{$~~$};
        \draw (0,1) -- (12,1);
        \draw (0,1) node[circle,draw=black, fill=white, inner sep = 0pt, minimum size = 0.1 pt, scale = .5, label = west: ${U}_2$]{$~~$};
        \draw (12,1) node[circle,draw=black, fill=white, inner sep = 0pt, minimum size = 0.1 pt, scale = .5]{$~~$};
        \draw (8,1.5) -- (20,1.5);
        \draw (8,1.5) node[circle,draw=black, fill=white, inner sep = 0pt, minimum size = 0.1 pt, scale = .5]{$~~$};
        \draw (20,1.5) node[circle,draw=black, fill=white, inner sep = 0pt, minimum size = 0.1 pt, scale = .5, label = east: ${U}_3$]{$~~$};
        \draw (0,2) -- (4,2);
        \draw (0,2) node[circle,draw=black, fill=white, inner sep = 0pt, minimum size = 0.1 pt, scale = .5, label = west: ${U}_4$]{$~~$};
        \draw (4,2) node[circle,draw=black, fill=white, inner sep = 0pt, minimum size = 0.1 pt, scale = .5]{$~~$};
        \draw (16,2.5) -- (20,2.5);
        \draw (16,2.5) node[circle,draw=black, fill=white, inner sep = 0pt, minimum size = 0.1 pt, scale = .5]{$~~$};
        \draw (20,2.5) node[circle,draw=black, fill=white, inner sep = 0pt, minimum size = 0.1 pt, scale = .5, label = east: ${U}_5$]{$~~$};
        \draw (16,3) -- (20,3);
        \draw (16,3) node[circle,draw=black, fill=white, inner sep = 0pt, minimum size = 0.1 pt, scale = .5, label = west: ${U}_6$]{$~~$};
        \draw (20,3) node[circle, draw=black, fill=white, inner sep = 0pt, minimum size = 0.1 pt, scale = .5]{$~~$};
        \draw[<->](-4,0) -- (24,0);
     \end{tikzpicture}
    \caption{Convex realization of $\{ {\bf 1356, 123, 124 }, 12, 13, \emptyset \}$ in $\mathbb{R}$.}
    \label{fig:rec-fields}
    \end{figure}
    
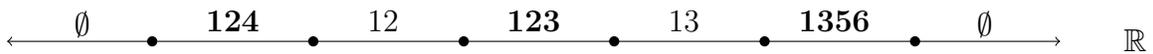
\begin{figure}[ht]
    \centering
\begin{tikzpicture}[scale=.5]
\draw (0,0) node[anchor = east, circle, fill, inner sep = 0pt, minimum size = .1 pt, 
    scale = .4]{0} -- 
(4,0) node[anchor = west,circle,fill, inner sep = 0pt, minimum size = .1 pt, 
    scale = .4]{1} -- 
(8,0) node[anchor = west,circle,fill, inner sep = 0pt, minimum size = .1 pt, 
    scale = .4]{2} -- 
(12,0) node[anchor = west,circle,fill, inner sep = 0pt, minimum size = .1 pt, 
    scale = .4]{3} -- 
(16,0) node[anchor = west,circle,fill, inner sep = 0pt, minimum size = .1 pt, 
    scale = .4]{4} -- 
(20,0) node[anchor = west,circle,fill, inner sep = 0pt, minimum size = .1 pt, 
    scale = .4]{5} ;
\draw (2,1.5) node[circle, fill = white, label = below: ${\bf 124}$]{};
\draw (6,1.5) node[circle, fill = white, label = below: $12$]{};
\draw (10,1.5) node[circle, fill = white, label = below: ${\bf 123}$]{};
\draw (14,1.5) node[circle, fill = white, label = below: $13$]{};
\draw (18,1.5) node[circle, fill = white, label = below: ${\bf 1356}$]{};
\draw (22,1.5) node[circle, fill = white, label = below: $\emptyset$]{};
\draw (-2, 1.5) node[circle, fill = white, label = below: $\emptyset$]{};
\draw (26, 1) node[circle, fill = white, label = below: $\mathbb{R}$]{};

\draw[->] (0,0) -- (-4,0);
\draw[->] (20,0) -- (24,0);
\end{tikzpicture}
    \caption{The realization in Figure~\ref{fig:rec-fields}, with regions labeled by codewords.}
    \label{fig:1-d-realization}
\end{figure}

\end{exmp}

\begin{lem} \label{lem:construction}
Let $\Delta$ be a simplicial complex with exactly three facets $F_1$, $F_2$, and $F_3$.  Assume that $\Delta$ satisfies the Path-of-Facets Condition.  
Then the minimal code $\code_{\rm min}(\Delta)$ is convex, and 
moreover
$\dim( \code_{\rm min}(\Delta) ) = 1$.
\end{lem}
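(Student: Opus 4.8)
The plan is to write down an explicit realization of $\code_{\rm min}(\Delta)$ by open intervals of $\mathbb{R}$, directly generalizing the picture in Example~\ref{ex:1-d}. The first step is to identify $\code_{\rm min}(\Delta)$ exactly. After relabeling $F_1,F_2,F_3$, I may assume the empty set among the three sets in the Path-of-Facets Condition is $(F_1\cap F_3)\smallsetminus F_2$; equivalently $F_1\cap F_3\subseteq F_2$, so $F_1\cap F_3 = F_1\cap F_2\cap F_3 =: \sigma$, while $(F_1\cap F_2)\smallsetminus F_3$ and $(F_2\cap F_3)\smallsetminus F_1$ are nonempty. The only intersections of two or more facets are $F_1\cap F_2$, $F_2\cap F_3$, and $\sigma$. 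Since $F_1\cap F_2\not\subseteq F_3$ and $F_2\cap F_3\not\subseteq F_1$ and both are nonempty, Lemma~\ref{lem:intersect-2} shows their links are non-contractible, so they lie in $\code_{\rm min}(\Delta)$; by Lemma~\ref{lem:contractible}, ${\rm Lk}_\sigma(\Delta)$ is contractible, so $\sigma$ is excluded (when $\sigma=\emptyset$ it is present anyway). Hence $\code_{\rm min}(\Delta) = \{F_1,\,F_2,\,F_3,\,F_1\cap F_2,\,F_2\cap F_3,\,\emptyset\}$. In particular $F_1,F_2,F_3$ are distinct nonempty codewords, so $\code_{\rm min}(\Delta)$ is not realizable on a single point and $\dim(\code_{\rm min}(\Delta))\geq 1$.

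For the realization, I would partition the neurons by which facets contain them. Using $F_1\cap F_3\subseteq F_2$, every neuron lying in some facet belongs to exactly one of the blocks $R_1 := F_1\smallsetminus F_2$, $R_2 := F_2\smallsetminus(F_1\cup F_3)$, $R_3 := F_3\smallsetminus F_2$, $R_{12} := (F_1\cap F_2)\smallsetminus F_3$, $R_{23} := (F_2\cap F_3)\smallsetminus F_1$, or $\sigma$ (neurons in no facet are assigned $U_i=\emptyset$, or assumed absent). Fix reals $p_0 < p_1 < \dots < p_5$ (say $p_j = j$) and set, for a neuron $i$: $U_i = (p_0,p_1)$ if $i\in R_1$; $U_i = (p_0,p_3)$ if $i\in R_{12}$; $U_i = (p_2,p_3)$ if $i\in R_2$; $U_i = (p_2,p_5)$ if $i\in R_{23}$; $U_i = (p_4,p_5)$ if $i\in R_3$; and $U_i = (p_0,p_5)$ if $i\in\sigma$. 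Each $U_i$ is an open interval, hence convex and open, so a realization by these sets would show $\dim(\code_{\rm min}(\Delta))\leq 1$.

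What remains is to check this family realizes precisely $\code_{\rm min}(\Delta)$. Since all the $U_i$ have endpoints in $\{p_0,\dots,p_5\}$, the set $\{\, i \mid x\in U_i \,\}$ is constant on each open interval cut out by the $p_j$'s and on each point $p_j$, so one only needs to evaluate it at one point of each piece. Reading left to right, the codewords obtained should be $\emptyset$ on $(-\infty,p_0]$, then $F_1$ on $(p_0,p_1)$, then $F_1\cap F_2$ on $[p_1,p_2]$, then $F_2$ on $(p_2,p_3)$, then $F_2\cap F_3$ on $[p_3,p_4]$, then $F_3$ on $(p_4,p_5)$, then $\emptyset$ on $[p_5,\infty)$, each identification using the decompositions $F_1 = R_1\sqcup R_{12}\sqcup\sigma$, $F_2 = R_2\sqcup R_{12}\sqcup R_{23}\sqcup\sigma$, $F_3 = R_3\sqcup R_{23}\sqcup\sigma$, $F_1\cap F_2 = R_{12}\sqcup\sigma$, $F_2\cap F_3 = R_{23}\sqcup\sigma$. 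This yields the realized code $\{F_1,F_2,F_3,F_1\cap F_2,F_2\cap F_3,\emptyset\} = \code_{\rm min}(\Delta)$, and combined with the lower bound gives $\dim(\code_{\rm min}(\Delta)) = 1$.

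There is no real conceptual obstacle here: the only step that uses the hypothesis is the determination of $\code_{\rm min}(\Delta)$ via Lemmas~\ref{lem:intersect-2} and~\ref{lem:contractible}, and the construction is then a direct generalization of Example~\ref{ex:1-d}. The one point requiring care is the bookkeeping when some of the blocks $R_1, R_2, R_3, \sigma$ are empty (the blocks $R_{12}$ and $R_{23}$ are nonempty by hypothesis, which is what keeps the intervals overlapping so that the line is covered from $p_0$ to $p_5$ with no spurious gap): the identifications above remain valid because on each piece the relevant union of blocks still equals the claimed codeword, losing or gaining no block.
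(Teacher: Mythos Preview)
Your proof is correct and follows essentially the same approach as the paper: after relabeling so that $F_1\cap F_3\subseteq F_2$, you use Lemmas~\ref{lem:intersect-2} and~\ref{lem:contractible} to compute $\code_{\rm min}(\Delta)=\{F_1,F_2,F_3,F_1\cap F_2,F_2\cap F_3,\emptyset\}$, then assign to each neuron an open interval according to which facets contain it, with the same six-case assignment the paper uses. Your argument is in fact slightly more thorough than the paper's, since you explicitly justify the lower bound $\dim(\code_{\rm min}(\Delta))\geq 1$ and discuss what happens when some of the blocks $R_1,R_2,R_3,\sigma$ are empty.
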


\begin{proof}
As 
$\Delta$ satisfies the Path-of-Facets Condition, 
we can relabel the facets by $F_a,F_b,F_c$ so that 
$(F_a \cap F_b ) \smallsetminus F_c$ 
and 
$(F_b \cap F_c ) \smallsetminus F_a$ are nonempty, 
while
$(F_a \cap F_c ) \smallsetminus F_b$ is empty (that is, $F_a \cap F_c = F_a \cap F_b \cap F_c$).  
Next, the link ${\rm Lk}_{F_1 \cap F_2 \cap F_3} (\Delta)$ is contractible (by Lemma~\ref{lem:contractible}), so $F_a \cap F_b \cap F_c \notin \code_{\rm min}(\Delta)$.  On the other hand, by a straightforward application of Lemma~\ref{lem:intersect-2}, the links of $F_a \cap F_b$ and $F_b \cap F_c$ are both not contractible.  
We conclude that the minimal code is 
$\code_{\rm min}(\Delta) = 
\{ \mathbf{
F_a, ~F_b, ~F_c, }~
F_a \cap F_b, ~F_b \cap F_c,~ \emptyset \}$. 

We will construct a convex open realization $\mathcal{U} = \{U_i\}_i$ of $\code_{\rm min}(\Delta)$ in $\mathbb{R}$ such that the codewords appear in the order depicted in Figure~\ref{fig:realization} (which generalizes  Figure~\ref{fig:1-d-realization}).  Accordingly, for each neuron $i$, we define the receptive field $U_i$ as follows:

\begin{equation*}
    U_i ~:=~ \begin{cases}
               (0,5)               & \textrm{if } i \in F_a \cap F_b \cap F_c\\
                (0,3)              & \textrm{if } i \in (F_a \cap F_b) \smallsetminus F_c\\
                (2,5)              & \textrm{if } i \in (F_b \cap F_c) \smallsetminus F_a\\
                (0,1)              & \textrm{if } i \in F_a \smallsetminus (F_b \cup F_c) \\
                (2,3)              & \textrm{if } i \in F_b \smallsetminus (F_a \cup F_c) \\
                (4,5)              & \textrm{if } i \in F_c \smallsetminus (F_a \cup F_b) \\
           \end{cases}
\end{equation*}

\begin{figure}[ht]
    \centering
\begin{tikzpicture}[scale=.5]

\draw (0,0) node[anchor = east, circle, fill, inner sep = 0pt, minimum size = .1 pt,label = below: $0$, scale = .4]{0} -- 
(4,0) node[anchor = west,circle,fill, inner sep = 0pt, minimum size = .1 pt,label = below: 1, scale = .4]{1} -- 
(8,0) node[anchor = west,circle,fill, inner sep = 0pt, minimum size = .1 pt,label = below: 2, scale = .4]{2} -- 
(12,0) node[anchor = west,circle,fill, inner sep = 0pt, minimum size = .1 pt,label = below: $3$, scale = .4]{3} -- 
(16,0) node[anchor = west,circle,fill, inner sep = 0pt, minimum size = .1 pt,label = below: $4$, scale = .4]{4} -- 
(20,0) node[anchor = west,circle,fill, inner sep = 0pt, minimum size = .1 pt,label = below: $5$, scale = .4]{5}; 

\draw[->] (0,0) -- (-4,0);
\draw[->] (20,0) -- (24,0);

\draw (2,1.5) node[circle, fill = white, label = below: $\mathbf{F_a}$]{};
\draw (6,1.5) node[circle, fill = white, label = below: $F_a \cap F_b$]{};
\draw (10,1.5) node[circle, fill = white, label = below: $\mathbf{F_b}$]{};
\draw (14,1.5) node[circle, fill = white, label = below: $F_b \cap F_c$]{};
\draw (18,1.5) node[circle, fill = white, label = below: $\mathbf{F_c}$]{};
\draw (22,1.5) node[circle, fill = white, label = below: $\emptyset$]{};
\draw (-2, 1.5) node[circle, fill = white, label = below: $\emptyset$]{};
\draw (26, .5) node[circle, fill = white, label = below: $\mathbb{R}$]{};
\end{tikzpicture}
    \caption{Convex realization of $\mathcal{C}_{min}(\Delta)$ in $\mathbb{R}$.}
    \label{fig:realization}
\end{figure}
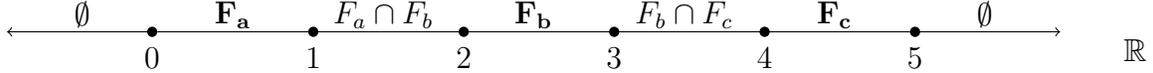


We now show that the realization of $\mathcal{U}= \{U_i\}_i$ contains all codewords in $\mathcal{C}_{\rm min}(\Delta)$ and no other codewords. 
It is evident, by construction, that each codeword of $\mathcal{C}_{\rm min}(\Delta)$ appears in the intervals indicated in Figure~\ref{fig:realization} (e.g., the codeword $F_a$ is realized in the interval $(0,1)$).
%
%
So, all that is left to show is that no additional codewords are realized at the endpoints of the intervals.
%
%
Indeed, it is straightforward to check that the endpoints $0,1,2,3,4,5$ give rise to the codewords $\emptyset$, $F_a \cap F_b$, $F_a \cap F_b$, $F_b \cap F_c$, $F_b \cap F_c,$ and $\emptyset$, respectively. 
\end{proof}

We can now completely characterize convexity of codes $\mathcal{C}$ with three maximal codewords. 
In what follows (see the proof of Theorem~\ref{thm:main-restated}), we show that when 
$\Delta(C)$ does not satisfy the Path-of-Facets Condition, we have:

\begin{center}
max-intersection-complete $\Leftrightarrow$ convex $\Leftrightarrow$ no local obstructions.    
\end{center}
On the other hand, when 
$\Delta(C)$ does satisfy the Path-of-Facets Condition, we have only convex $\Leftrightarrow$ no local obstructions. 

For convenience, we restate Theorem~\ref{thm:main}, as follows.

\begin{thm}[Theorem~\ref{thm:main}, restated] \label{thm:main-restated}
If $\mathcal{C}$ is a neural code with up to three maximal codewords, then:
\begin{itemize}
    \item $\mathcal{C}$ is convex if and only if $\mathcal{C}$ has no local obstructions, and
    \item if $\mathcal{C}$ is convex, then $\dim(\code) \leq 2.$
\end{itemize}
\end{thm}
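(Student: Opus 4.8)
The proof naturally splits into two cases according to the number of maximal codewords and, when there are three, whether the Path-of-Facets Condition holds. If $\mathcal{C}$ has one or two maximal codewords, the statement is exactly Proposition~\ref{prop:2-max}, so assume $\mathcal{C}$ has exactly three maximal codewords, which are the three facets $F_1, F_2, F_3$ of $\Delta := \Delta(\mathcal{C})$. One direction (convex $\Rightarrow$ no local obstructions) is immediate from Proposition~\ref{prop:local-obs}, so the work is in proving that \emph{no local obstructions} $\Rightarrow$ \emph{convex with $\dim(\mathcal{C}) \le 2$}. By Proposition~\ref{prop:monotone} (convexity is monotone) it suffices to show $\code_{\rm min}(\Delta)$ is convex with $\dim(\code_{\rm min}(\Delta)) \le 1$, since any code $\mathcal{C}$ with no local obstructions satisfies $\code_{\rm min}(\Delta) \subseteq \mathcal{C} \subseteq \Delta$, and monotonicity then gives $\dim(\mathcal{C}) \le \dim(\code_{\rm min}(\Delta)) + 1 \le 2$.

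\textbf{Case analysis on $\Delta$.} I would split into two subcases. \emph{Subcase 1: $\Delta$ satisfies the Path-of-Facets Condition.} Then Lemma~\ref{lem:construction} directly gives that $\code_{\rm min}(\Delta)$ is convex with minimal embedding dimension exactly $1$, and we are done by the monotonicity argument above. \emph{Subcase 2: $\Delta$ does not satisfy the Path-of-Facets Condition.} Here I would argue that $\code_{\rm min}(\Delta)$ is in fact max-intersection-complete, so Proposition~\ref{prop:max-intersection-complete} applies with $k = 3$ to give convexity and $\dim(\code_{\rm min}(\Delta)) \le \max\{2, k-1\} = 2$. (One gets $\dim(\mathcal{C}) \le 2$ directly from Proposition~\ref{prop:max-intersection-complete} applied to $\mathcal{C}$ itself if $\mathcal{C}$ is already max-intersection-complete; more carefully, one shows a code $\mathcal{C}$ with no local obstructions and non-Path-of-Facets $\Delta$ must contain all pairwise and the triplewise intersections of facets, hence is max-intersection-complete.) To see that $\code_{\rm min}(\Delta)$ is max-intersection-complete in this subcase: the pairwise intersections $F_i \cap F_j$ that are \emph{not} equal to the triplewise intersection $\sigma := F_1 \cap F_2 \cap F_3$ are intersections of exactly two facets not contained in the third, so by Lemma~\ref{lem:intersect-2} their links are non-contractible and they lie in $\code_{\rm min}(\Delta)$; the remaining task is the triplewise intersection $\sigma$, and I must show that when the Path-of-Facets Condition fails, $\sigma$ is forced into $\code_{\rm min}(\Delta)$ — equivalently, either $\sigma$ equals some $F_i$ (so it is a facet) or ${\rm Lk}_\sigma(\Delta)$ is non-contractible.

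\textbf{The key point.} The heart of Subcase 2 is understanding, via Lemma~\ref{lem:contractible}, exactly when ${\rm Lk}_\sigma(\Delta)$ fails to be contractible: by that lemma it is contractible \emph{if and only if} the Path-of-Facets Condition holds. So if the Path-of-Facets Condition fails and $\sigma$ is not itself a facet, then ${\rm Lk}_\sigma(\Delta)$ is non-contractible and $\sigma$ is the intersection of (at least) two facets, hence $\sigma \in \code_{\rm min}(\Delta)$ by definition of the minimal code. Combined with the pairwise intersections handled above, this shows $\code_{\rm min}(\Delta)$ contains every intersection of two or more of $F_1, F_2, F_3$, i.e., it is max-intersection-complete. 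I should also dispatch the degenerate possibility that $\sigma$ equals one of the facets $F_i$ — then $\sigma$ is a facet of $\Delta$, hence automatically in $\code_{\rm min}(\Delta)$, and again $\code_{\rm min}(\Delta)$ is max-intersection-complete. A minor check is needed to confirm the failure of the Path-of-Facets Condition covers all remaining configurations (zero, two, or three of the three "difference" sets nonempty), and that in each such configuration Lemma~\ref{lem:contractible}'s criterion indeed fails; this is the routine combinatorial bookkeeping. The only genuinely delicate step is making sure the monotonicity bound $\dim(\mathcal{C}) \le \dim(\code_{\rm min}(\Delta)) + 1$ is applied correctly — in Subcase 2 one wants the cleaner bound $\dim(\mathcal{C}) \le 2$, which follows because $\mathcal{C}$ itself is max-intersection-complete there (Proposition~\ref{prop:max-intersection-complete}), rather than incurring the $+1$ from monotonicity.
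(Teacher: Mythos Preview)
Your proposal is correct and follows essentially the same route as the paper: split on the number of maximal codewords, and for three maximal codewords split on the Path-of-Facets Condition, using Lemma~\ref{lem:construction} plus monotonicity in one subcase and max-intersection-completeness (via Lemmas~\ref{lem:intersect-2} and~\ref{lem:contractible}) plus Proposition~\ref{prop:max-intersection-complete} applied directly to $\mathcal{C}$ in the other. Your self-correction at the end---applying Proposition~\ref{prop:max-intersection-complete} to $\mathcal{C}$ itself rather than to $\code_{\rm min}(\Delta)$ in Subcase~2 to avoid the spurious $+1$---is exactly what the paper does, and your extra worry about $\sigma$ equaling a facet is harmless but unnecessary (it cannot happen, since facets are inclusion-maximal).
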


\begin{proof} 
We already know that convex codes have no local obstructions (Proposition~\ref{prop:local-obs}).  
For the converse, let $\mathcal{C}$ be a code with up to three maximal codewords and no local obstructions.  We must show that $\mathcal{C}$ is convex and, moreover, $\dim(\code) \leq 2$.  

The case of one or two maximal codewords is Proposition~\ref{prop:2-max}.  
So, now assume that $\mathcal{C}$ has exactly three maximal codewords.  We first consider the subcase when $\Delta(\mathcal{C})$ satisfies the Path-of-Facets Condition.  We have that $\mathcal{C}_{\rm min}(\Delta(\mathcal{C})) \subseteq \mathcal{C} \subseteq \Delta(\mathcal{C})$ (and $\Delta(\mathcal{C})$ is the simplicial complex of $\mathcal{C}_{\rm min}(\Delta(\mathcal{C}))$).
Thus, Lemma~\ref{lem:construction} and Proposition~\ref{prop:monotone} 
together imply that $\code$ is convex and
 $\dim(\code) \leq 2$.

We consider the remaining subcase, when $\Delta(\mathcal{C})$ does not satisfy the Path-of-Facets Condition.  We first claim that $\mathcal{C}$ is max-intersection-complete.  
To see this, let $\sigma$ be the intersection of two or three maximal codewords of $\code$.  If $\sigma$ is the intersection of two maximal codewords and is not contained in the third maximal codeword, then Lemma~\ref{lem:intersect-2} implies that ${\rm Lk}_\sigma(\Delta)$ is not contractible and so (as $\code$ has no local obstructions) $\sigma \in \code$.  
If, on the other hand, 
$\sigma$ is the intersection of all three maximal codewords, then, by Lemma~\ref{lem:contractible}, 
${\rm Lk}_\sigma(\Delta)$ is again not contractible (because the Path-of-Facets Condition does not hold) 
and so (as before) $\sigma \in \code$.  Hence, our claim holds and 
so, Proposition~\ref{prop:max-intersection-complete} implies that $\code$ is convex with $\dim(\code) \leq 2$.
\end{proof}

\begin{exmp}  \label{ex:2-d}
Recall from Example~\ref{ex:1-d} that the following code has minimal embedding dimension 1: 
$\code_{\rm min}(\Delta) = \{ {\bf 1356, 123, 124 }, 12, 13, \emptyset \}$.  
By adding the non-maximal codewords $\{23, 24, 5, 6\}$, we obtain a code which we denote by $\mathcal{D}$.
A $2$-dimensional convex realization of $\mathcal{D}$ is depicted in Figure~\ref{fig:2-d-realization}.  This realization is obtained by 
following the proof of Theorem~\ref{thm:main} 
(which, via Proposition~\ref{prop:monotone}, relies on a construction of Cruz {\em et al.}~\cite{cruz2019open}).
Informally, the steps are as follows.  Starting from the $1$-dimensional realization 
of $\mathcal{C}_{\rm min}(\Delta(\mathcal{C}))$ 
in Figure~\ref{fig:realization}, we  ``fatten'' each interval $U_i$ into $\mathbb{R}^2$ and then intersect with an open ball.  Then, for each additional non-maximal codeword $\tau$, we ``slice off'' part of a region corresponding to a codeword $\widetilde \tau$ for which $\tau \subseteq \widetilde \tau$.

\begin{figure}[ht]
    \centering
    \begin{tikzpicture}
                \draw (-3,0) -- (-1.8,2.4) to [bend left = 10] (-.6,2.922) -- (.6,2.922) to [bend left = 10] (1.8,2.4) -- (3,0) -- (1.8,-2.4) to [bend left = 33] (-1.8,-2.4) to [bend left = 25] (-3,0) [fill = gray, draw = gray, opacity = .5];
                
                \draw (-.6,2.922) to [bend left = 23] (1.8,2.4) -- (3,0) -- (1.8,-2.4) to [bend left = 23] (-.6,-2.922) -- (-.6, 2.922) [draw = green, thick, fill = green!30, opacity = .5];
                
                \draw (.6, -2.922) to [bend left = 21] (-1.8,-2.4) to [bend left = 25] (-3,0) to [bend left = 27] (-1.8,2.4) to [bend left = 21] (.6,2.922) -- (.6,-2.922) [thick, draw = red, pattern = horizontal lines, pattern color = red, opacity = .75];
                
                \draw (-1.8,-2.4) to [bend left = 58] (-1.8,2.4) -- (-1.8,-2.4)[draw = blue, thick, fill = blue!20, opacity = .5];
                
                \draw (1.8,2.4) to [bend left = 25] (3,0) -- (1.8,-2.4) -- (1.8,2.4) [thick, draw = yellow, fill = yellow!30, opacity = .7];
                
                \draw (1.8,2.4) -- (3,0) to [bend left = 25] (1.8,-2.4) -- (1.8,2.4) [thick, draw = orange, pattern = horizontal lines, pattern color = orange, opacity = .75];
                
                \draw (-3,0) -- (-1.8,2.4) [draw = gray, opacity = .5];
                \draw (1.8,2.4) -- (3,0) [draw = orange, opacity = .5];
                \draw (.6,2.922) -- (.6,-2.922) [draw = red, opacity = .5];
                \draw (-.6,2.922) -- (.6,2.922) [draw = black, opacity = .5];
                
                \node at (-2.4,0) {$124$};
                \node at (0,0) {$123$};
                \node at (2.4,0) {$1356$};
                \node at (-1.2, 0) {$12$};
                \node at (1.2,0) {$13$};
                \node at (-3.3, 1.3) {$24$};
                \draw (-3.15,1.3) -- (-2.6,1.3) [->];
                \node at (2.5, 1.3) {$6$};
                \node at (2.5, -1.3) {$5$};
                \node at (0,3.5) {$23$};
                \draw (0,3.35) -- (0,2.95) [->];

                \node[draw, text width=0.11\linewidth,inner sep=2mm,align=left,
                 below left] at (8, 2.3)
                {\color{gray} \textbf{-----} \color{black} : ${U}_1$ \par 
                \color{red} \textbf{- - -} \color{black} : ${U}_2$ \par 
                \color{green} \textbf{-----} \color{black} : ${U}_3$ \par
                \color{blue} \textbf{-----} \color{black} : ${U}_4$ \par
                \color{orange} \textbf{- - -} \color{black} : ${U}_5$ \par
                \color{yellow} \textbf{-----} \color{black} : ${U}_6$ \par
                };
    \end{tikzpicture}
    \caption{Convex realization of $\mathcal{D} = 
    \{ {\bf 1356, 123, 124 }, 12, 13, 23, 24, 5, 6, \emptyset \}$.}
    \label{fig:2-d-realization}
\end{figure}
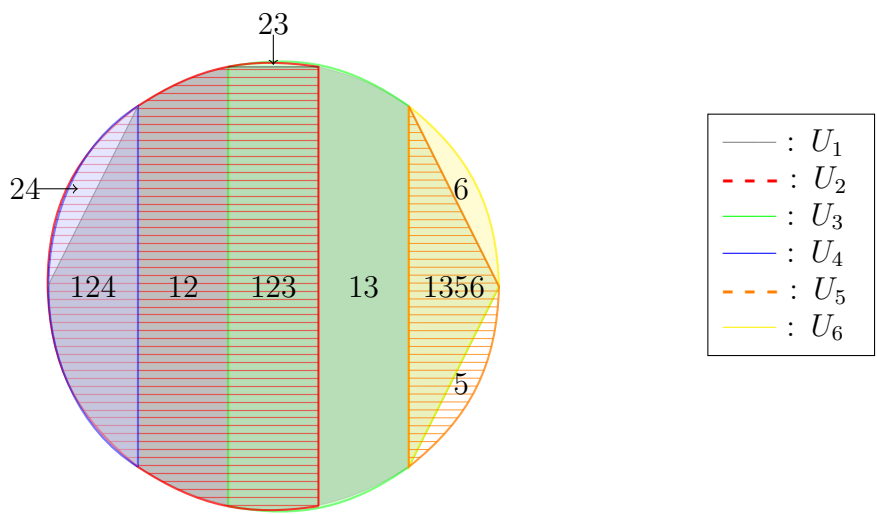
\end{exmp}

\begin{remark}[1-dimensional vs.\ 2-dimensional codes] \label{rem:1-d-vs-2-d} 
Theorem~\ref{thm:main-restated} states that convex codes with up to three maximal codewords have minimal embedding dimension one or two.  
To distinguish between these two possible dimensions, 
we refer the reader to the classification of 1-dimensional codes due to Rosen and Yan~\cite{zvi-yan}.
\end{remark}

\begin{remark}[Open convex vs.\ closed convex codes] \label{rem:open-v-closed}
Convexity in
Theorem~\ref{thm:main-restated} can be replaced by ``closed convexity'' (having a realization by convex sets that are closed).  Indeed, the realizations by convex, open sets that are used in our proofs are easily seen to be ``nondegenerate'', as defined by Cruz {\em et al.}, and so their results imply that taking closures of the open sets in a realization yields a closed, convex realization~\cite[Theorem 2.12]{cruz2019open}.
\end{remark}

\section{Discussion}\label{sec:discussion}
It is in general difficult to determine whether a given code is convex.  Nevertheless, here we showed that this task is easy for codes that have at most three maximal codewords: Convexity for such codes is equivalent to lacking local obstructions.  Also, in this setting, open convexity and closed convexity are equivalent (recall Remark~\ref{rem:open-v-closed}).  We note that neither of these equivalences holds in general for codes with four or more maximal codewords~\cite{cruz2019open,Lienkaemper_2017}.  

It is also usually difficult to ascertain the minimal embedding dimension of a convex code.  In fact, there are few results in this direction, and many such results only bound the dimension (for instance, see~\cite{curto2017,leray,GOY}).  It is therefore notable that we are able to achieve precise dimensions for a family of codes (by Theorem~\ref{thm:main-restated} and Remark~\ref{rem:1-d-vs-2-d}).  Indeed, our results help clarify which neural codes are easy to understand and which ones remain challenging.

 
\subsection*{Acknowledgements}
The authors thank Alexander Ruys de Perez for insightful discussions and useful suggestions. 
KJ and CS conducted this research in the 2020 REU in the Department of Mathematics at Texas A\&M
University, supported by NSF grant DMS-1757872.  
AS was supported by NSF grant DMS-1752672.  


\bibliography{mybibliography}{}
\bibliographystyle{plain}

\end{document}